\documentclass[11pt,reqno]{amsart}
\usepackage{amssymb,amsmath,amsthm,newlfont,enumerate}

\DeclareMathOperator{\rank}{rank}

\theoremstyle{plain}
\newtheorem{theorem}{Theorem}[section]

\newtheorem{lemma}[theorem]{Lemma}

\theoremstyle{definition}

\theoremstyle{remark}

\newcommand{\cF}{\mathcal{F}}

\newcommand{\cL}{\mathcal{L}}
\newcommand{\CC}{\mathbb{C}}
\newcommand{\DD}{\mathbb{D}}
\newcommand{\RR}{\mathbb{R}}

\newcommand{\TT}{\mathbb{T}}

\DeclareMathOperator{\supp}{supp}
\DeclareMathOperator{\dist}{dist}

\begin{document}

\title[Negative powers of Hilbert-space contractions]{A proof of  Esterle's conjecture  on negative powers of Hilbert-space contractions}

\author[T. Ransford]{Thomas Ransford}
\address{D\'epartement de math\'ematiques et de statistique, Universit\'e Laval, Qu\'ebec (QC), G1V 0A6, Canada}
\email{ransford@mat.ulaval.ca}

\date{20 Aug 2026}

\begin{abstract}
We establish the following result,  confirming a conjecture of Jean Esterle.
For each closed subset $E$ of the unit circle  of Lebesgue measure zero, there exists a
positive  sequence $u_n\to\infty$
with the following property: 
if $T$ is a contraction on a Hilbert space such that $\sigma(T)\subset E$ and 
$\|T^{-n}\|=O(u_n)$ as $n\to\infty$, then $T$ is a unitary operator.  

A key tool used in the proof is a result
generalizing the well-known fact that closed subsets $E$ of the real axis of Lebesgue measure
zero are removable for bounded holomorphic functions. We show that such sets remain
removable even for  certain
unbounded holomorphic functions of moderate growth near $E$,
where the notion of `moderate' depends on $E$.
\end{abstract}

\thanks{Research supported  by NSERC Discovery Grant RGPIN-2026-04565}

\keywords{Contraction, unitary, inner function, removable singularity}

\makeatletter
\@namedef{subjclassname@2020}{\textup{2020} Mathematics Subject Classification}
\makeatother

\subjclass[2020]{30J05, 47A45, 47A56}

\maketitle

\section{Introduction and statement of main results}\label{S:Intro}

Let $T$ be a bounded linear operator on a complex Hilbert space.
We denote the operator norm and spectrum of $T$ by $\|T\|$ and $\sigma(T)$ respectively. We say $T$ is a \emph{contraction} if $\|T\|\le 1$.

Our goal in this article is to establish the following theorem,
which confirms a conjecture  of Jean Esterle \cite[Conjecture~1.3]{Ra24}.  

\begin{theorem}\label{T:Esterle}
For each closed subset $E$ of the unit circle $\TT$ of Lebesgue measure zero, there exists a
positive  sequence $u_n\to\infty$
with the following property: 
if $T$ is a contraction on a Hilbert space such that $\sigma(T)\subset E$ and 
$\|T^{-n}\|=O(u_n)$ as $n\to\infty$, then $T$ is a unitary operator.
\end{theorem}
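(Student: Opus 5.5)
We first reduce to the completely non-unitary case and then use the Sz.-Nagy--Foias functional model together with the removability theorem advertised in the abstract.

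\textbf{Reduction.} Decompose $T=U\oplus T_0$ with $U$ unitary and $T_0$ completely non-unitary (c.n.u.). Then $\sigma(T_0)\subset\sigma(T)\subset E$, and $\|T_0^{-n}\|\le\|T^{-n}\|$. So it suffices to show that a c.n.u.\ contraction $T_0$ with these properties acts on the zero space.

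\textbf{Model for $T_0$.} Since $\sigma(T_0)\subset E$ and $E$ has measure zero, $\sigma(T_0)\ne\overline{\DD}$. Also $T_0$ is invertible, so its defect operators have equal finite or infinite rank. Moreover, for each $z\in\TT\setminus E$, the resolvent is analytic on a neighbourhood of $z$. By Sz.-Nagy--Foias, the characteristic function $\Theta$ is then inner (both $T_0^n\to0$ and $T_0^{*n}\to0$ strongly; this follows from $\sigma(T_0)\subset\TT$ having measure zero). $\Theta$ extends analytically across $\TT\setminus E$, and is invertible there. Thus $T_0$ is unitarily equivalent to the compression of the shift to $H^2\ominus\Theta H^2$. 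If $T_0\ne0$, then $\Theta$ is a nonconstant inner function whose singularities on $\TT$ lie in $E$. The case of $T_0$ acting on $\{0\}$ is exactly the case where $\Theta$ is a unitary constant.

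\textbf{Growth estimate.} The key link is the following. For $|z|>1$, the function $\Theta(1/\bar z)^{*-1}$ gives the reflected extension of $\Theta$ outside the disc. Its norm is controlled by the resolvent $\|(zI-T_0)^{-1}\|$. Using the Laurent series $(zI-T_0)^{-1}=-\sum_{n\ge0}z^{n}T_0^{-n-1}$, valid for $|z|<1$ near the region outside $\sigma$, we can bound the resolvent by $\sum\|T_0^{-n}\||z|^n$. Inside the disc, the resolvent is likewise $O(1/(1-|z|))$-type. Combining the two bounds, $\Theta$ extends holomorphically to $\widehat{\CC}\setminus E$, with $\|\Theta(z)\|\le \phi(\dist(z,E))$-type growth near $E$. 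Here $\phi$ is determined by $(u_n)$ via
\[
\phi(r)\approx\sup_n u_n e^{-nr}
\]
(more precisely via $\sum_n u_n(1-r)^n$).

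\textbf{Removability.} Now we invoke the removability theorem (after a Möbius map sending the circle to the real axis). For a given null set $E$ there is an admissible growth $\phi_E$, tending to infinity as $r\to0$, such that holomorphic functions on $\CC\setminus E$ with $|f(z)|=O(\phi_E(\dist(z,E)))$ extend holomorphically across $E$. We choose $u_n\to\infty$ slowly enough that the induced $\phi$ is dominated by $\phi_E$, and apply the theorem to each scalar function $\langle\Theta x,y\rangle$. Each then extends to an entire function on $\widehat{\CC}$, bounded at $\infty$, hence constant. So $\Theta$ is constant, contradicting that it is nonconstant inner. Hence $T_0=0$ acts on $\{0\}$, and $T=U$ is unitary.

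The main obstacle is the quantitative growth step. We must turn the bound on $\|T^{-n}\|$ into a pointwise bound on the analytic continuation of $\Theta$ near $E$ that is uniform in $T$. We also need to check that the relevant Laurent series converge on the correct regions despite $E$ lying on the circle, and we expect to need an estimate like $\|(zI-T)^{-1}\|\lesssim\sum_n u_n|z|^n+1/(1-|z|)$. The translation to $\phi$ must be monotone so that slow $u_n$ gives slow $\phi$. The removability theorem itself is taken as given.
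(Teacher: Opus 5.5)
Your architecture matches the paper's: pass to the c.n.u.\ part, model it by a nonconstant inner $\Theta$ extending holomorphically to $\CC_\infty\setminus E$ with unitary values on $\TT\setminus E$, apply the removability theorem to a scalar coefficient $\langle\Theta x,y\rangle$, and conclude by Liouville. Running it directly instead of by contraposition would also be fine. But the quantitative step you flagged is a genuine gap, and the mechanism you propose for it cannot work. Estimating $\Theta(z)$ for $|z|>1$ through the Laurent expansion of the resolvent and the triangle inequality gives at best $\|\Theta(z)\|\lesssim\sum_n\|T^{-n}\|\,|z|^{-n}$. Since $\|T^{-n}\|\ge r(T^{-n})=1$, this is at least of order $1/(|z|-1)$, \emph{no matter how slowly $u_n$ grows}; your own formula gives $\sum_n u_n(1-r)^n\ge(1-r)/r$. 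More fundamentally, $\|(\mu I-T)^{-1}\|\ge 1/\dist(\mu,\sigma(T))$, so nothing routed through resolvent norms can do better near $\sigma(T)$. Growth of order $1/|y|$ is never admissible. The $\omega$ in Theorem~\ref{T:removsing} must be integrable at $0$, and $1/(z-x_0)$ shows that even a single point is not removable for $|f(x+iy)|=O(1/|y|)$. So ``choose $u_n$ slowly enough that $\phi\le\phi_E$'' is impossible for the $\phi$ your method produces. Your heuristic $\sup_n u_ne^{-nr}$ has the right size, but nothing in your argument produces a supremum rather than a sum.

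The missing ingredient is the lower bound $\|S_\Theta^{-n}\|\ge\frac12(1/\delta_n(\Theta)-1)$ of \cite[Theorem~5.5]{Ra24}, combined with the reflection identity $\Theta(\lambda)\Theta(1/\overline{\lambda})^*=I$ (Lemma~\ref{L:deltan}). Together they give
\[
\min\bigl\{|\lambda|^n,\|\Theta(\lambda)\|\bigr\}\le 2\|T^{-n}\|+1\qquad(|\lambda|>1,\ n\ge1),
\]
and the minimum is what lets you choose $n$ adapted to $|\lambda|$. Take $\omega$ from the removability theorem, define $t_n$ by $\omega(t_n)=(1+t_n)^n$, and set $u_n:=(1+t_{n+1})^{n/2}$. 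Suppose $\|T^{-n}\|\le Cu_n$ and $1+t_{n+1}\le|\lambda|\le 1+t_n$ with $n$ large. Then $|\lambda|^n\ge u_n^2>2Cu_n+1$, so $\|\Theta(\lambda)\|\le 2Cu_n+1\le 2C\,\omega(|\lambda|-1)^{1/2}+1$. This is below $\omega(|\lambda|-1)$ near $\TT$, and your finish goes through. The paper runs the same estimate contrapositively. Non-removability for a nonconstant coefficient $f$ yields points $\lambda_k$, necessarily outside $\overline{\DD}$, with $|f(\lambda_k)|>\omega(|\lambda_k|-1)$. The maximum principle on $\{|\lambda|>1+t_{n+1}\}$ then gives $1/\delta_n(\Theta)\ge u_n^2$ for infinitely many $n$.
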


Versions of this result have long been known for specific sets $E$: 
for example countable compact sets \cite{Za93},
certain Cantor sets \cite{Es94b}, and Carleson sets \cite{Es94a,Ke98}. 
In each case, the corresponding sequence $(u_n)$ was explicitly identified. It is also known that no single sequence $(u_n)$
can work simultaneously for all compact sets $E$ of measure zero
\cite{Ke98}. Also the condition that $E$ have measure zero is optimal: Theorem~\ref{T:Esterle} breaks down for every closed set $E$ of positive measure \cite{Ra24}. Finally, the result also breaks down if `Hilbert space'
is replaced by `Banach space' \cite{Es94a}. For a detailed account 
of the background to Esterle's conjecture, we refer
to \cite{Ra24}.

In  \cite[Theorem~1.5]{Ra24} it was shown that 
Theorem~\ref{T:Esterle} holds under the additional assumption
that $\rank(I-T^*T)<\infty$. In this note, we prove the full version, following a method suggested to the author by Fedja Nazarov. 
The new ingredient is a
removable singularity theorem, which we believe to be of interest in its own right.
It is a generalization of the well-known fact that a closed 
subset $E$ of the real line $\RR$ of Lebesgue measure zero
is removable for bounded holomorphic functions.
The new result shows that this remains true even for certain
unbounded holomorphic functions of moderate growth near $E$,
where what `moderate' means depends on $E$.

We denote by $|\cdot|$  Lebesgue measure on $\RR$.
Also, given $E\subset\CC$ and $t>0$, 
we write $E_t:=\{z\in\CC:\dist(z,E)\le t\}$.

\begin{theorem}\label{T:removsing}
Let $E$ be a compact subset of $\RR$ such that $|E|=0$.
\begin{enumerate}[\normalfont(i)]
\item 
There exists a decreasing continuous function
$\omega:(0,\infty)\to(1,\infty)$ such that
\begin{equation}\label{E:removsing}
\lim_{t\to0^+}\omega(t)=\infty
\quad\text{and}\quad
\liminf_{t\to0^+}\frac{|E_t\cap\RR|}{t}\int_0^t\omega(s)\,ds=0.
\end{equation}
\item
Every $\omega$ satisfying the conditions in (i)
has the following  property:
if $\Omega$ is an open subset of $\CC$ containing $E$, and
if $f$ is a holomorphic function on $\Omega\setminus E$ 
such that $|f(x+iy)| \le \omega(|y|)$
for all $x+iy\in\Omega\setminus\RR$,
then $f$ has a holomorphic extension to the whole of $\Omega$.
\end{enumerate}
\end{theorem}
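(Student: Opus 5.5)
Part (i) is a construction; part (ii) is a Cauchy-integral argument in which the growth of $f$ near $E$ is paid for by the smallness of $|E_t\cap\RR|$.

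\textbf{Part (i).} Set $\phi(t):=|E_t\cap\RR|$. Then $\phi$ is increasing, and $\phi(t)\to|E|=0$ as $t\to0^+$, because $\bigcap_t E_t\cap\RR=E$ and the sets are compact. We build $\omega$ piecewise along a sequence $t_k\downarrow0$ chosen so that $\phi(t_k)$ decays rapidly.

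\begin{enumerate}
\item Choose $t_k$ inductively with $\phi(t_k)\le 2^{-k}/(k\,C_{k-1})$, where $C_{k-1}$ bounds $\omega$ on $[t_k,\infty)$ as already defined. The idea is to let $\omega$ equal roughly $k$ on $[t_{k+1},t_k]$.
\item It is cleaner to demand directly that $\frac1t\int_0^t\omega\le \omega(t/2)+\dots$. More simply: take $\omega$ decreasing and continuous with $\omega\to\infty$ so slowly that $\int_0^t\omega(s)\,ds\le 2t\,\omega(t)$, for instance a slowly varying profile interpolating the values $k$.
\item Then it suffices that $\phi(t_k)\,\omega(t_k)\to0$. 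This is arranged by choosing the sequence $(t_k)$ first and letting $\omega$ climb to level $k$ only below $t_k$, where $k$ is chosen with $k\,\phi(t_k)\to0$, e.g.\ $k=\lfloor\phi(t_k)^{-1/2}\rfloor$.
\item Concretely, define $\omega(t_k):=\max\bigl(k,\ \phi(t_k)^{-1/2}\bigr)^{\text{truncated}}$, adjusted to be increasing in $k$. Interpolate linearly in $\log t$ between consecutive $t_k$. Space the $t_k$ so sparsely ($t_{k+1}\le t_k/2^{k}$) that the average of $\omega$ over $[0,t_k]$ is dominated by values near $t_k$.
\end{enumerate}

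This is routine bookkeeping.

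\textbf{Part (ii).} The statement is local, so fix a closed interval $I$ and a closed disc $D\subset\Omega$ whose interior contains $E\cap I$; after shrinking we may assume $E\subset \operatorname{int}D$. Fix a $t$ from a sequence realizing the $\liminf$. Then $E_t\cap\RR$ is a finite union of intervals $J_1,\dots,J_m$ (each of length $\ge 2t$, total length $\phi(t)$). Let $R_j$ be the rectangle $J_j\times[-t,t]$, and let $\Gamma_t$ be the union of the boundaries of the $R_j$. For $z\in D\setminus\bigcup R_j$, Cauchy's formula gives
\[
f(z)=\frac1{2\pi i}\int_{\partial D}\frac{f(\zeta)}{\zeta-z}\,d\zeta-\frac1{2\pi i}\int_{\Gamma_t}\frac{f(\zeta)}{\zeta-z}\,d\zeta .
\]
We show that the second term tends to $0$ along the sequence, for each fixed $z\notin\RR$ (or $z$ away from $E$).
\begin{enumerate}
\item The horizontal sides of the $R_j$ have total length $2\phi(t)$, and on them $|f|\le\omega(t)$. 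Since $\omega$ is decreasing, $\omega(t)\le \frac1t\int_0^t\omega$, so these sides contribute $O\bigl(\frac{\phi(t)}{t}\int_0^t\omega\bigr)$.
\item Each vertical side contributes at most $2\int_0^t\omega(s)\,ds$, and there are $2m$ of them. Since each $J_j$ has length $\ge2t$, we have $m\le\phi(t)/(2t)$. So the vertical sides contribute $O\bigl(\frac{\phi(t)}{t}\int_0^t\omega\bigr)$ as well.
\end{enumerate}
Both bounds tend to zero along the chosen sequence, by \eqref{E:removsing}. Hence $f$ coincides, off $E$, with the first Cauchy integral, which is holomorphic in $\operatorname{int}D$. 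This gives the extension.

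\textbf{Main obstacle.} The delicate point is the vertical-side count: bounding the number of components of $E_t\cap\RR$ by $\phi(t)/2t$. This works precisely because each component contains a point of $E$ together with its $t$-neighbourhood. One also has to make sure the integrability of $\omega$ near $0$ is available; it is implicit, since the $\liminf$ in \eqref{E:removsing} is finite only if $\int_0^t\omega<\infty$. Part (i) is just a careful but elementary construction.
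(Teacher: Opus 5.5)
Your proof is correct, and part (ii) takes a genuinely different route from the paper.

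\textbf{Comparison of the two routes for (ii).} The paper first proves a general removability criterion, Lemma~\ref{L:removsing}, valid for any compact $F\subset\CC$ with $m(F)=0$. It multiplies $f$ by a smooth cutoff $\phi_\eta$ that vanishes on $F_{\eta/4}$, equals $1$ off $F_\eta$, and has $|\partial\phi_\eta/\partial\overline{z}|\le C/\eta$. The Cauchy--Pompeiu formula then reduces everything to the area condition $\liminf_{\eta\to0}\eta^{-1}\int_{F_\eta}|f|\,dm=0$. For $E\subset\RR$ this follows at once from $E_\eta\subset(E_\eta\cap\RR)\times[-\eta,\eta]$.

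You instead integrate over the boundaries of the rectangles $J_j\times[-t,t]$ directly, which amounts to using a sharp cutoff. The price is that you need pointwise control along specific curves. The horizontal sides need $t\,\omega(t)\le\int_0^t\omega$, which uses monotonicity of $\omega$. The vertical sides need the component count $m\le|E_t\cap\RR|/(2t)$, which uses the one-dimensional structure of $E$. Your argument is more elementary, since it uses only Cauchy's integral formula. The paper's smoothing in effect averages over a family of contours, so it needs only an $L^1$ bound, no monotonicity, and no information about the geometry of $F_\eta$. That is why Lemma~\ref{L:removsing} holds for arbitrary planar sets of measure zero.

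\textbf{Localization.} As written, the localization is loose. Take $D$ to be a closed disc centred on $\RR$ whose radius is chosen so that $\partial D\cap E=\emptyset$; this is possible because $|E|=0$. Then patch the local extensions using density of the complement of $E$. Alternatively, skip localization altogether and replace $\partial D$ by a cycle in $\Omega\setminus E$ that winds once around $E$, as the paper does.

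\textbf{Part (i).} Your ``concrete'' recipe in step 4 is not reliable as stated. With values $\phi(t_k)^{-1/2}$ and very sparse spacing, the integral of $\omega$ over $[t_{k+1},t_k]$ can be dominated by the huge value $\omega(t_{k+1})$ rather than by values near $t_k$.

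The simple version from your step 1 does work. Choose $t_1>t_2>\cdots$ with $t_{k+1}\le t_k/2$ and $|E_{t_k}\cap\RR|\le k^{-2}$. Set $\omega(t_k)=k+1$, interpolate linearly between consecutive $t_k$, and take $\omega\equiv 2$ on $[t_1,\infty)$. Then $\int_0^{t_k}\omega\le\sum_{j\ge k}(j+2)t_j\le(2k+6)t_k$, so the quantity in \eqref{E:removsing} is $O(1/k)$ along $(t_k)$.

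This plays the same role as the paper's Lemma~\ref{L:realvar}. That lemma obtains the estimate more slickly via the auxiliary function $\sqrt{t\rho(t)}$, which yields the explicit bound $|E_{t_n}\cap\RR|^{1/2}$.
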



\section{Proof of Theorem~\ref{T:removsing}}\label{S:removsing}

The proof uses two lemmas. In the first of these, $m$ denotes planar Lebesgue measure.

\begin{lemma}\label{L:removsing}
Let $\Omega$ be an open subset of $\CC$,
and let $F$ be a compact subset of $\CC$ such that $m(F)=0$.
Then every holomorphic function $f:\Omega\setminus F\to\CC$ 
such that
\begin{equation}\label{E:cond}
\liminf_{\eta\to0}\frac{1}{\eta}\int_{F_\eta}|f(z)|\,dm(z)=0
\end{equation}
has a holomorphic extension to the whole of $\Omega$.
\end{lemma}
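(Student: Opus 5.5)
We prove Lemma~\ref{L:removsing} by Weyl's lemma: it suffices to show that $f$, extended by zero on $F$ (or arbitrarily, since $m(F)=0$), is locally integrable on $\Omega$ and satisfies $\partial f/\partial\bar z=0$ in the sense of distributions on $\Omega$. Then $f$ agrees a.e.\ with a holomorphic function $g$ on $\Omega$. Since $g=f$ on the open set $\Omega\setminus F$ (both are continuous there), $g$ is the desired extension.

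First, local integrability. Condition \eqref{E:cond} gives a sequence $\eta_k\to0$ with $\int_{F_{\eta_k}}|f|\,dm\le \eta_k$, in particular $\int_{F_{\eta}}|f|\,dm<\infty$ for some $\eta>0$. Implicitly we take $F\subset\Omega$; the part of $F$ outside $\Omega$ is irrelevant, or one may shrink $\eta$ so that $F_\eta\cap\Omega$ is handled locally. Away from $F_\eta$ the function $f$ is continuous, so $f\in L^1_{\rm loc}(\Omega)$.

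Next, the distributional equation. Fix a test function $\phi\in C_c^\infty(\Omega)$. For each $\eta=\eta_k$ choose a cutoff $\chi_\eta\in C^\infty(\CC)$ with $0\le\chi_\eta\le1$, $\chi_\eta=1$ on $F_{\eta/2}$, $\supp\chi_\eta\subset F_\eta$ and $|\nabla\chi_\eta|\le C/\eta$. Such a cutoff is obtained by mollifying the indicator of $F_{3\eta/4}$ at scale $\eta/8$. Then $(1-\chi_\eta)\phi$ is a test function supported in $\Omega\setminus F$, where $f$ is holomorphic, so
\[
0=\int f\,\bar\partial\bigl((1-\chi_\eta)\phi\bigr)\,dm
=\int f(1-\chi_\eta)\,\bar\partial\phi\,dm-\int f\phi\,\bar\partial\chi_\eta\,dm .
\]
The first integral tends to $\int f\,\bar\partial\phi\,dm$ by dominated convergence, because $f\in L^1_{\rm loc}$ and $1-\chi_\eta\to 1_{\CC\setminus F}$ a.e., using $m(F)=0$. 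The second integral is bounded by $\|\phi\|_\infty (C/\eta)\int_{F_\eta}|f|\,dm$, which tends to $0$ along $\eta_k$ by \eqref{E:cond}. Hence $\int f\,\bar\partial\phi\,dm=0$ for all $\phi$, and Weyl's lemma for $\bar\partial$ gives the conclusion.

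The main point requiring care is the construction of the cutoff with gradient $O(1/\eta)$ supported in $F_\eta$, together with the local integrability of $f$ near $F$. Both are routine once stated: the cutoff comes from mollification, and integrability from \eqref{E:cond} itself. No earlier result of the paper is needed.
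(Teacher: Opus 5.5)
Your proof is correct, and its analytic core is the same as the paper's. Your $\chi_\eta$ is essentially $1-\phi_\eta$ for the paper's cutoff: both come from mollifying the indicator of a neighbourhood of $F$ at a scale comparable to $\eta$. In both arguments the only error term is bounded by $\frac{C}{\eta}\int_{F_\eta}|f|\,dm$, which vanishes along a sequence by hypothesis.

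The two proofs differ in how this vanishing is turned into holomorphy. The paper applies the Cauchy--Pompeiu formula to $f\phi_\eta$ on the region $U$ enclosed by a cycle $\Gamma\subset\Omega\setminus F$ winding once around $F$. Letting $\eta\to0$ gives $f(\zeta)=\frac{1}{2\pi i}\int_\Gamma\frac{f(z)}{z-\zeta}\,dz$ on $U\setminus F$. This is self-contained, uses only classical complex analysis, handles $f$ only on $\Omega\setminus F$, and gives an explicit formula for the extension. It does, however, need the cycle $\Gamma$, and hence uses that $F$ is a compact subset of $\Omega$.

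You instead show that $\bar\partial f=0$ in the weak sense on all of $\Omega$, using $f\in L^1_{\mathrm{loc}}(\Omega)$ and $m(F)=0$ in the dominated-convergence step, and then invoke Weyl's lemma for $\bar\partial$. Your route is purely local: there is no contour or winding-number bookkeeping, and the argument runs unchanged for test functions anywhere in $\Omega$, even if $F\not\subset\Omega$. The price is that you quote hypoellipticity of $\bar\partial$ as a black box, and you obtain the extension only as the holomorphic representative of an $L^1_{\mathrm{loc}}$ function.
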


The idea for the proof that follows was suggested  by Fedja Nazarov.

\begin{proof}
Fix $\psi\in C^\infty(\CC)$ with $\supp\psi\subset\DD$ and $\int_\CC\psi\,dm=1$.
For each $\eta>0$, define $\phi_\eta:\CC\to\RR$  by
\[
\phi_\eta(z):=1-\frac{1}{(\eta/4)^2}\int_{F_{\eta/2}}\psi\Bigl(\frac{z-w}{\eta/4}\Bigr)\,dm(w).
\]
Then $\phi_\eta\in C^\infty(\CC)$  and it satisfies 
$\phi_\eta=1$ on $\CC\setminus F_\eta$ and 
$\phi_\eta=0$ on $F_{\eta/4}$. 
Also
 $|\partial\phi_\eta/\partial\overline{z}|\le C_\psi/\eta$ 
on~$\CC$, where $C_\psi$ is a constant
depending just on $\psi$.

Let $f:\Omega\setminus F\to\CC$ be a holomorphic function
such that \eqref{E:cond} holds.
Define $f_\eta:\Omega\to\CC$ by
\[
f_\eta(z):=
\begin{cases}
f(z)\phi_\eta(z), &z\in\Omega\setminus F,\\
0 , &z\in F_{\eta/4}.
\end{cases}
\]
Then
$f_\eta\in C^\infty(\Omega)$,
and it satisfies
$f_\eta=f$ on $\Omega\setminus F_\eta$
and $\partial f_\eta/\partial\overline{z}=f\partial\phi_\eta/\partial\overline{z}$ on $\Omega\setminus F$.
Fix $\Gamma$, a finite union of closed contours in $\Omega\setminus F$ such that
\[
n(\Gamma,w)=
\begin{cases}
0 \text{~or~} 1, &\forall w\in\CC\setminus\Gamma,\\
1, &\forall w\in F,\\
0, &\forall w\in\CC\setminus\Omega.
\end{cases}
\]
Set $U:=\{w\in\Omega\setminus\Gamma: n(\Gamma,w)=1\}$, an open neighbourhood of $F$.
By the Cauchy--Pompeiu formula,
\[
f_\eta(\zeta)=\frac{1}{2\pi i}\int_\Gamma \frac{f_\eta(z)}{z-\zeta}\,dz
-\frac{1}{\pi}\int_{U}\frac{\partial f_\eta}{\partial\overline{z}}(z)\frac{1}{z-\zeta}\,dm(z) \quad(\zeta\in U).
\]
In particular, if $\eta<\dist(\Gamma,F)$,
then
\[
f(\zeta)=\frac{1}{2\pi i}\int_\Gamma \frac{f(z)}{z-\zeta}\,dz
-\frac{1}{\pi}\int_{F_\eta}f(z)\frac{\partial \phi_\eta}{\partial\overline{z}}(z)\frac{1}{z-\zeta}\,dm(z) \quad(\zeta\in U\setminus F_\eta).
\]
For each $\zeta\in U\setminus F_\eta$, we have
\[
\Bigl|\frac{1}{\pi}\int_{F_\eta}f(z)\frac{\partial \phi_\eta}{\partial\overline{z}}(z)\frac{1}{z-\zeta}\,dm(z)\Bigr|
\le \frac{C_\psi}{\pi\dist(\zeta,F_\eta)}\frac{1}{\eta}\int_{F_\eta}|f(z)|\,dm(z).
\]
Taking $\liminf_{\eta\to0}$ and using \eqref{E:cond}, we deduce that
\[
f(\zeta)=\frac{1}{2\pi i}\int_\Gamma \frac{f(z)}{z-\zeta}\,dz
\quad(\zeta\in U\setminus F).
\]
As a function of $\zeta$,
the right-hand is holomorphic on $U$, so it provides the required
holomorphic extension of $f$.
\end{proof}

\begin{lemma}\label{L:realvar}
Let $\rho:(0,\infty)\to(0,\infty)$ be a continuous increasing function such that $\lim_{t\to 0^+}\rho(t)/t=\infty$.
Then there exists a continuous decreasing function $\omega:(0,\infty)\to(1,\infty)$ such that
\[
\lim_{t\to0^+}\omega(t)=\infty
\quad\text{and}\quad
\liminf_{t\to0^+}\frac{1}{\rho(t)}\int_0^t\omega(s)\,ds=0.
\]
\end{lemma}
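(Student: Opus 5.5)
We must construct $\omega$ explicitly, exploiting only that $\rho(t)/t\to\infty$. Since only a $\liminf$ is required, it suffices to find a sequence $t_k\downarrow0$ along which $\frac{1}{\rho(t_k)}\int_0^{t_k}\omega\to0$. The plan is to build $\omega$ as a decreasing step-like function that takes value roughly $k$ on a dyadic-type block of scales, with the blocks chosen so sparsely that at the top of each block the integral is dominated by the $\rho(t_k)/t_k$ gain.

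First choose the scales. Set $g(t):=\rho(t)/t$, so $g(t)\to\infty$ as $t\to0^+$. Inductively choose $1=t_0>t_1>t_2>\dots\to0$ such that
\[
g(t)\ge k^3 \quad\text{for all } t\in(0,t_k],
\]
and also $t_{k+1}\le t_k/k^{2}$, say. Then define a preliminary step function $\omega_0(s):=k+2$ for $s\in(t_{k+1},t_k]$ (and $\omega_0=2$ on $(1,\infty)$). This is decreasing, greater than $1$, and tends to $\infty$. Its integral is
\[
\int_0^{t_k}\omega_0(s)\,ds=\sum_{j\ge k}(j+2)(t_j-t_{j+1})\le (k+2)t_k+\sum_{j>k}(j+2)t_j,
\]
and the geometric-type decay $t_{j+1}\le t_j/j^2$ makes the tail at most $C\,t_k$ (indeed at most $t_k\sum_{j>k}(j+2)/\prod\dots$, certainly $\le (k+3)t_k$ for large $k$). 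Hence $\int_0^{t_k}\omega_0\le 2(k+3)t_k$, and
\[
\frac{1}{\rho(t_k)}\int_0^{t_k}\omega_0\le\frac{2(k+3)t_k}{k^3t_k}\to0.
\]

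Finally, make $\omega$ continuous: replace $\omega_0$ by a continuous decreasing function $\omega$ with $\omega_0\le\omega\le\omega_0+1$-type control, e.g.\ on each interval $(t_{k+1},t_k]$ interpolate linearly from value $k+3$ at $t_{k+1}$ down to $k+2$ at $t_k$. Then $\omega\le\omega_0+1$, so the integral bound changes by at most $t_k$, and the estimate above persists; $\omega$ is continuous, decreasing, $>1$, and tends to $\infty$. The only step needing care is the tail summation, ensuring the sparse choice of $t_j$ keeps $\int_0^{t_k}\omega$ of order $k\,t_k$; choosing $t_{k+1}$ small enough relative to $t_k$ handles this, and the choice of $t_k$ with $g\ge k^3$ below $t_k$ is possible because $g\to\infty$ (the monotonicity of $\rho$ is not even essential).
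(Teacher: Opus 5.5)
Your construction is correct, but it organizes the estimate differently from the paper. The paper passes to the geometric mean $\rho_1(t)=\sqrt{t\rho(t)}$. It chooses $t_n\downarrow 0$ so that the chord slopes $d_n=(\rho_1(t_{n-1})-\rho_1(t_n))/(t_{n-1}-t_n)$ increase to $\infty$, and takes $\omega$ piecewise linear with $\omega(t_n)=d_n$. The integral then telescopes, $\int_0^{t_n}\omega\le\rho_1(t_n)$, so the ratio along the sequence is at most $(t_n/\rho(t_n))^{1/2}$.

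You instead fix the values of $\omega$ in advance, roughly $k$ on the $k$-th block, and let $\rho$ enter only through the choice of scales. The condition $\rho(t_k)/t_k\ge k^3$ makes the denominator large, and the super-geometric decay $t_{k+1}\le t_k/k^2$ keeps $\int_0^{t_k}\omega=O(k\,t_k)$.

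Your route buys transparency and generality. The recursive choice of scales is obviously possible, whereas the paper's requirement that $(d_n)$ be increasing needs a small argument left to the reader, and that argument uses $\rho_1(0^+)=0$, i.e. monotonicity of $\rho$. You use only $\rho>0$ and $\rho(t)/t\to\infty$, with no continuity or monotonicity of $\rho$, as you note. The paper's route buys an $\omega$ whose values are read off from $\rho$ itself, with an explicit rate along the sequence in terms of $\rho$. For the lemma as stated, and for its use in Theorem~\ref{T:removsing}, only existence matters, so either proof suffices.

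The one blemish in yours is cosmetic. The condition $t_{k+1}\le t_k/k^2$ is undefined at $k=0$ and allows $t_2=t_1$ at $k=1$, which would break continuity of your interpolated $\omega$ at $t_1$. Impose $t_{k+1}<t_k$ separately there.
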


\begin{proof}
Define $\rho_1:(0,\infty)\to(0,\infty)$ by $\rho_1(t):=\sqrt{t\rho(t)}$.
Then 
\[
\lim_{t\to0+}\frac{\rho_1(t)}{t}=\lim_{t\to0^+}\sqrt{\frac{\rho(t)}{t}}=\infty,
\]
so we may recursively choose a decreasing sequence $(t_n)$ tending to zero such that, if we set
\[
d_n:=\frac{\rho_1(t_{n-1})-\rho_1(t_{n})}{t_{n-1}-t_{n}},
\]
then $(d_n)$ is an increasing sequence that tends to infinity as $n\to\infty$.
Fix $n_0$ such that $d_{n_0}>1$, and define $\omega:(0,\infty)\to(1,\infty)$ by stipulating 
that $\omega(t_{n})=d_{n}$ for all $n\ge n_0$, that $\omega$ is linear on each interval $[t_{n+1},t_n]$
for $n\ge n_0$ and constant on $[t_{n_0},\infty)$. Clearly $\omega$ is a continuous decreasing
function such that $\lim_{t\to0^+}\omega(t)=\infty$. Also, for each $n\ge n_0$, we have
\begin{align*}
\int _0^{t_n}\omega(s)\,ds
&\le \sum_{k\ge n}\int_{t_{k+1}}^{t_k}\omega(s)\,ds
\le \sum_{k\ge n}\omega(t_{k+1})(t_k-t_{k+1})\\
&=\sum_{k\ge n}(\rho_1(t_k)-\rho_1(t_{k+1}))
\le \rho_1(t_n),
\end{align*}
and hence
\[
\frac{1}{\rho(t_n)}\int_0^{t_n}\omega(s)\,ds
=\frac{t_n}{\rho_1(t_n)^2}\int_0^{t_n}\omega(s)\,ds
\le \frac{t_n}{\rho_1(t_n)}.
\]
Since $t_n/\rho_1(t_n)\to0$, it follows that
\[
\lim_{n\to\infty}\frac{1}{\rho(t_n)}\int_0^{t_n}\omega(s)\,ds=0,
\]
which establishes the result.
\end{proof}

\begin{proof}[Proof of Theorem~\ref{T:removsing}]
(i) 
Define $\rho:(0,\infty)\to(0,\infty)$ by
\[
\rho(t):=\frac{t}{|E_t\cap \RR|} \quad(t>0).
\]
Then $\rho$ is a continuous increasing function
such that $\lim_{t\to0^+}\rho(t)/t=\infty$.
By Lemma~\ref{L:realvar}, there exists a
continuous decreasing function $\omega:(0,\infty)\to(1,\infty)$
such that \eqref{E:removsing} holds.

(ii) Let $\omega$ be a function satisfying the conclusions of (i),
let $\Omega$ be an open subset of $\CC$ containing $E$,
and let $f$ be a holomorphic function on $\Omega\setminus E$
such that $|f(x+iy)|\le \omega(|y|)$ for all $x+iy\in\Omega\setminus\RR$.
For each $\eta>0$, we have 
\[
E_\eta\subset (E_\eta\cap\RR)\times [-\eta,\eta],
\]
so,
if $\eta$ is small enough so that $(E_\eta\cap\RR)\times [-\eta,\eta]\subset\Omega$, then
\[
\int_{E_\eta}|f|\,dm\le \int_{E_\eta\cap\RR}\int_{-\eta}^\eta |f(x+iy)|\,dy\,dx
\le 2|E_\eta\cap\RR|\int_{0}^\eta \omega(y)\,dy.
\]
Hence
\[
\liminf_{\eta\to0^+}\frac{1}{\eta}\int_{E_\eta}|f|\,dm\le \liminf_{\eta\to0^+}\frac{2|E_\eta\cap\RR|}{\eta}\int_0^\eta\omega(y)\,dy=0,
\]
the last  equality by \eqref{E:removsing}.
By Lemma~\ref{L:removsing}, the function $f$ has a holomorphic extension to the whole of $\Omega$.
This shows that $\omega$ has the required property, and completes the proof of the theorem.
\end{proof}


\section{Proof of Theorem~\ref{T:Esterle}}\label{S:Esterle}

Given Hilbert spaces $\cF,\cF'$, we write $\cL(\cF,\cF')$ for the space of 
bounded linear operators from $\cF$ into $\cF'$.
Also we say that $T\in\cL(\cF,\cF')$ is \emph{purely contractive} if $\|Tx\|<\|x\|$ for all $x\in\cF\setminus\{0\}$.

The following result is a key step in the proof of
Theorem~\ref{T:Esterle}.

\begin{theorem}\label{T:inner}
For each closed subset $E$ of $\TT$ 
of Lebesgue measure zero,
 there exists a positive sequence $u_n\to\infty$
with the following property:
if $\cF,\cF'$ are  Hilbert spaces
and if $\Theta:(\CC_\infty\setminus E)\to\cL(\cF,\cF')$ is a 
non-constant holomorphic function such that
 $\Theta(\lambda)$ is  contractive for all $\lambda\in\DD$ and  unitary
 for all $\lambda\in\TT\setminus E$,
then
\[
\liminf_{n\to\infty}u_n\delta_n(\Theta)=0,
\]
where
\begin{equation}\label{E:deltandef}
\delta_n(\Theta):=\inf_{\lambda\in\DD}\max\bigl\{|\lambda|^n,\|\Theta(\lambda)^{-1}\|^{-1}\bigr\}.
\end{equation}
\end{theorem}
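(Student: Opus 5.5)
The plan is to argue by contradiction, combining the reflection principle for the characteristic function $\Theta$ with the removable singularity theorem, Theorem~\ref{T:removsing}. First I transport $E$ to the real line. Since $|E|=0$, some point $\zeta_0\in\TT\setminus E$ exists. I take a Möbius map $\mu$ that sends $\TT\setminus\{\zeta_0\}$ onto $\RR$ and $\DD$ onto the upper half-plane. Then $F:=\mu(E)$ is a compact subset of $\RR$ with $|F|=0$, and on a neighbourhood of $F$ the quantity $|\operatorname{Im}\mu(\lambda)|$ is comparable to $\dist(\lambda,\TT)$, say within a factor $A\ge1$. I apply Theorem~\ref{T:removsing}(i) to $F$ to obtain $\omega_0$, and set $\omega(t):=\omega_0(t/A)$. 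This rescaled $\omega$ still satisfies \eqref{E:removsing}, because $|F_{t}\cap\RR|$ is increasing in $t$ and $\int_0^t\omega_0(s/A)\,ds=A\int_0^{t/A}\omega_0$; any positive constant multiple of $\omega$ also satisfies \eqref{E:removsing}. Finally I define
\[
u_n:=\min\bigl\{\omega(n^{-1/2}),\ \log(n+2)\bigr\},
\]
which tends to infinity and grows at most logarithmically.

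Now suppose $\Theta$ is as in the statement but $\liminf_n u_n\delta_n(\Theta)>0$. Then there are $c>0$ and $N$ with $\delta_n(\Theta)\ge c/u_n$ for all $n\ge N$. Fix $\lambda\in\DD$ with $|\lambda|=1-s$, where $s$ is small. Whenever $(1-s)^n<c/u_n$, the definition \eqref{E:deltandef} forces $\|\Theta(\lambda)^{-1}\|^{-1}\ge c/u_n$. In particular $\Theta(\lambda)$ is invertible and $\|\Theta(\lambda)^{-1}\|\le u_n/c$.

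I choose $n$ of order $s^{-1}\log(u_n/c)$, which is possible because $u_n\le\log(n+2)$. This choice gives $n\le s^{-2}$ for small $s$, so $u_n\le\omega(n^{-1/2})\le\omega(s)$ since $\omega$ is decreasing. Hence
\[
\|\Theta(\lambda)^{-1}\|\le c^{-1}\omega(1-|\lambda|)
\]
near $\TT$. Making this choice of $n$ compatible with the slow growth of $u_n$ is the one real-variable computation, and I expect it to be the main (though routine) obstacle.

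Next I use the reflection identity. Since $\Theta$ is unitary on $\TT\setminus E$, the identity theorem gives $\Theta(\lambda)=\bigl(\Theta(1/\bar\lambda)^*\bigr)^{-1}$ for $|\lambda|>1$. So outside $\overline{\DD}$ near $E$ we get $\|\Theta(\lambda)\|=\|\Theta(1/\bar\lambda)^{-1}\|\le c^{-1}\omega(\dist(\lambda,\TT))$, up to adjusting constants. Inside $\DD$ we have $\|\Theta\|\le1$. Therefore, for fixed $x\in\cF$ and $y\in\cF'$, the scalar function $g(z):=\langle\Theta(\mu^{-1}(z))x,y\rangle$ is holomorphic on $\Omega\setminus F$ for a neighbourhood $\Omega$ of $F$. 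It satisfies $|g(x'+iy')|\le C\max\{1,\omega(|y'|)\}\|x\|\|y\|$, and a multiple of $\omega$ is admissible by the remark above.

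By Theorem~\ref{T:removsing}(ii), $g$ extends holomorphically across $F$. Hence every matrix coefficient of $\Theta$ extends to a holomorphic function on $\CC_\infty$, and is therefore constant by Liouville's theorem (compactness of $\CC_\infty$). So $\Theta$ is constant, contradicting the hypothesis. This proves $\liminf_n u_n\delta_n(\Theta)=0$.
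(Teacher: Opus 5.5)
Your overall strategy works and uses the same two ingredients as the paper: the reflection identity behind Lemma~\ref{L:deltan}, and Theorem~\ref{T:removsing} transported by a M\"obius map. The real-variable step you flagged is indeed routine; for example, $n=\lceil s^{-3/2}\rceil$ works.

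The step that is actually misjustified is the rescaling. Take $\omega(t)=\omega_0(t/A)$ with $A\ge1$ and substitute $\tau=t/A$. The quantity in \eqref{E:removsing} becomes $\tau^{-1}|F_{A\tau}\cap\RR|\int_0^\tau\omega_0(s)\,ds$. Monotonicity gives $|F_{A\tau}\cap\RR|\ge|F_\tau\cap\RR|$, which is the wrong direction.

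The same issue hides in your phrase ``up to adjusting constants''. Reflection gives $\|\Theta(\lambda)\|\le c^{-1}\omega(1-1/|\lambda|)$ for $|\lambda|>1$. Since $1-1/|\lambda|<|\lambda|-1$ and $\omega$ is decreasing, this is not bounded by a constant times $\omega(|\lambda|-1)$ in general. The constant has to go inside the argument of $\omega$, which is again a rescaling of the argument.

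Either of two short repairs works.
\begin{enumerate}
\item Prove the one-dimensional doubling bound $|F_{A\tau}\cap\RR|\le A\,|F_\tau\cap\RR|$. The set $F_\tau\cap\RR$ is a finite union of intervals of length at least $2\tau$. The set $F_{A\tau}\cap\RR$ lies in their $(A-1)\tau$-neighbourhood, and each enlarged interval has length at most $A$ times the original.
\item Rescale the other way: build $u_n$ from $\omega(t):=\omega_0(2At)$. For $|\lambda|\le2$ you then get $2A(1-1/|\lambda|)\ge A(|\lambda|-1)\ge|y'|$, hence $|g(x'+iy')|\le C\,\omega_0(|y'|)$. You can then apply Theorem~\ref{T:removsing}(ii) to $\omega_0$ itself.
\end{enumerate}
The paper's conformal transfer is phrased in terms of the removability property, which trivially survives decreasing $\omega$, so it does not run into this issue.

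With that fixed, your proof is essentially a contrapositive rearrangement of the paper's.
\begin{itemize}
\item The paper argues directly. One non-constant coefficient $f=\langle\Theta v,v'\rangle$ is not removable, so $|f(\lambda_k)|>\omega(|\lambda_k|-1)$ at points $\lambda_k$ outside $\overline{\DD}$ tending to $\TT$. The maximum principle on $\{|\lambda|\ge1+t_{n+1}\}$, formula \eqref{E:deltan}, and the implicit choice $\omega(t_n)=(1+t_n)^n$ then give the quantitative bound $\delta_n(\Theta)\le u_n^{-2}$ for infinitely many $n$.
\item You instead convert the assumed bound $\delta_n(\Theta)\ge c/u_n$ into a pointwise growth bound on $\|\Theta(\lambda)^{-1}\|$ near $\TT$, choosing $n$ as a function of $1-|\lambda|$. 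You then reflect and make every coefficient removable.
\end{itemize}
Your route avoids the maximum principle and the definition of $t_n$. The cost is the logarithmic cap on $u_n$, which is harmless since only $u_n\to\infty$ is needed. You also obtain only the qualitative conclusion $\liminf_n u_n\delta_n(\Theta)=0$.
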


In \cite[Theorem~5.7]{Ra24} it was shown that 
Theorem~\ref{T:inner} holds in the special case
where $\dim\cF=\dim\cF'<\infty$ (leading to the version of Theorem~\ref{T:Esterle}
where $\rank(I-T^*T)<\infty$).
The technique used was to reduce to the scalar case
$\dim_\cF=\dim\cF'=1$ by taking determinants, and then to 
exploit the fact that, in that case,
$\Theta$ is a singular inner function, so has a representation in terms of a singular measure
on $E$. In the case where $\dim \cF=\dim\cF'=\infty$, we know of no such representation,
and so the problem of proving Theorem~\ref{T:inner} was left open \cite[Conjecture~7.1]{Ra24}.
Using the removable singularity result Theorem~\ref{T:removsing}, we can now solve this problem.

Before embarking upon on the proof of Theorem~\ref{T:inner},
it will be convenient to establish an alternative formula
for $\delta_n(\Theta)$.

\begin{lemma}\label{L:deltan}
Let $E,\cF,\cF',\Theta,\delta_n(\Theta)$ be as in the
statement of Theorem~\ref{T:inner}. Then
\begin{equation}\label{E:deltan}
1/\delta_n(\Theta)=\sup_{|\lambda|>1}\min\bigl\{|\lambda|^n,\|\Theta(\lambda)\|\bigr\}.
\end{equation}
\end{lemma}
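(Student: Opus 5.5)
The plan is to prove a reflection identity for $\Theta$ across $\TT$, namely
\[
\Theta(\lambda)^{-1}=\Theta(1/\overline{\lambda})^*\qquad(\lambda\in\CC_\infty\setminus E),
\]
with the conventions $1/\overline{0}=\infty$ and $1/\overline{\infty}=0$. Once this holds, \eqref{E:deltan} follows from the change of variable $\mu=1/\overline{\lambda}$, which maps $\DD\setminus\{0\}$ bijectively onto $\{1<|\mu|<\infty\}$.

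To get the identity, I consider the two functions
\[
G(\lambda):=\Theta(1/\overline{\lambda})^*\,\Theta(\lambda)\quad\text{and}\quad H(\lambda):=\Theta(\lambda)\,\Theta(1/\overline{\lambda})^*,
\]
both defined on $\CC_\infty\setminus E$. The map $\lambda\mapsto\Theta(1/\overline{\lambda})^*$ is holomorphic there, being the composition of two antiholomorphic operations: the antiholomorphic map $\lambda\mapsto 1/\overline{\lambda}$ of the Riemann sphere, followed by taking adjoints. Hence $G$ and $H$ are holomorphic with values in $\cL(\cF)$ and $\cL(\cF')$ respectively.

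On $\TT\setminus E$ we have $1/\overline{\lambda}=\lambda$, and $\Theta(\lambda)$ is unitary there. Therefore $G=I_\cF$ and $H=I_{\cF'}$ on $\TT\setminus E$. Since $E$ is closed of measure zero, $\TT\setminus E$ is a nonempty relatively open subset of $\TT$, so it contains an arc and has accumulation points in $\CC_\infty\setminus E$.

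Also $\CC_\infty\setminus E$ is connected. Indeed, $\DD$ and $\CC_\infty\setminus\overline{\DD}$ are each connected, and they are joined through any point of the nonempty set $\TT\setminus E$. The identity theorem for vector-valued holomorphic functions, applied for instance to $\lambda\mapsto\langle G(\lambda)x,y\rangle$, then gives $G\equiv I$ and $H\equiv I$. So $\Theta(\lambda)$ is invertible with $\Theta(\lambda)^{-1}=\Theta(1/\overline{\lambda})^*$, and consequently $\|\Theta(\lambda)^{-1}\|=\|\Theta(1/\overline{\lambda})\|$.

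Now fix $\lambda\in\DD\setminus\{0\}$ and set $\mu:=1/\overline{\lambda}$. Then
\[
\max\bigl\{|\lambda|^n,\|\Theta(\lambda)^{-1}\|^{-1}\bigr\}
=\max\bigl\{|\mu|^{-n},\|\Theta(\mu)\|^{-1}\bigr\}
=\Bigl(\min\bigl\{|\mu|^n,\|\Theta(\mu)\|\bigr\}\Bigr)^{-1}.
\]
Taking the infimum over $\lambda\in\DD\setminus\{0\}$ is the same as taking the supremum over $1<|\mu|<\infty$ of the reciprocal quantity. This yields \eqref{E:deltan}, provided the point $\lambda=0$ can be discarded from the infimum defining $\delta_n(\Theta)$.

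The only remaining obstacle is this point $\lambda=0$, which corresponds to $\mu=\infty$ and is excluded from the supremum over $|\lambda|>1$. The function $\lambda\mapsto\max\{|\lambda|^n,\|\Theta(\lambda)^{-1}\|^{-1}\}$ is continuous on $\DD$, because $\Theta(\lambda)^{-1}=\Theta(1/\overline{\lambda})^*$ is continuous near $0$ (as $\Theta$ is holomorphic near $\infty$). Hence its value at $0$ is a limit of values at nearby nonzero points. So the infimum over $\DD$ equals the infimum over $\DD\setminus\{0\}$, and the lemma follows.
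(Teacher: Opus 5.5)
Your proof is correct and takes the same route as the paper: derive the reflection identity $\Theta(\lambda)\Theta(1/\overline{\lambda})^*=I$ on $\CC_\infty\setminus E$ from its validity on $\TT\setminus E$ via the identity principle, then substitute $\mu=1/\overline{\lambda}$. You fill in several things the paper leaves under ``follows easily''. You also prove the companion identity $\Theta(1/\overline{\lambda})^*\Theta(\lambda)=I$, which is needed for two-sided invertibility and does not follow from the single identity the paper states. You check that $\CC_\infty\setminus E$ is connected, and you dispose of the point $\lambda=0$ by continuity.
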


\begin{proof}
The function $\lambda\mapsto\Theta(\lambda)\Theta(1/\overline{\lambda})^*$ is holomorphic on 
$\CC_\infty\setminus E$ and equal to the identity $I$ 
when $\lambda\in\TT\setminus E$.
By the identity principle, it follows that 
$\Theta(\lambda)\Theta(1/\overline{\lambda})^*=I$
for all $\lambda\in\CC_\infty\setminus E$.
The formula \eqref{E:deltan} follows easily.
\end{proof}

\begin{proof}[Proof of Theorem~\ref{T:inner}]
Let $E$ be a closed subset of $\TT$ of Lebesgue measure zero.
By Theorem~\ref{T:removsing} and a simple conformal-mapping argument,
there exists a decreasing continuous function $\omega:(0,\infty)\to(1,\infty)$
satisfying $\lim_{t\to0^+}\omega(t)=\infty$ with the following property:
if $\Omega$ is an open subset of $\CC$ containing $E$,
and if $f:\Omega\setminus E\to\CC$ is a holomorphic function
such that $|f(re^{i\theta})|\le \omega(|r-1|)$ for all $re^{i\theta}\in\Omega\setminus\TT$,
then $f$ has a holomorphic extension to the whole of $\Omega$.

For  $n\ge1$, the function $t\mapsto\omega(t)-(1+t)^n:(0,\infty)\to\RR$ is continuous, strictly decreasing,
tends to $+\infty$ as $t\to0^+$ and to $-\infty$ as $t\to\infty$.
Therefore there exists a unique $t_n\in (0,\infty)$ such that $\omega(t_n)=(1+t_n)^n$.
Clearly the sequence $(t_n)$ is strictly decreasing and $t_n\to0$ as $n\to\infty$.
Set 
\[
u_n:=(1+t_{n+1})^{n/2} \quad(n\ge1).
\]
Note that
\[
u_n=\omega(t_{n+1})^{n/(2n+2)} \quad(n\ge1),
\]
which implies that $u_n\to\infty$.
We shall show that this sequence $(u_n)$ has the property prescribed in the statement of the theorem.

Let $\cF,\cF'$ be Hilbert spaces, 
and let $\Theta:(\CC_\infty\setminus E)\to\cL(\cF,\cF')$ be a 
non-constant holomorphic function such that
$\Theta(\lambda)$ is contractive for all $\lambda\in\DD$ and  unitary
for all $\lambda\in\TT\setminus E$. Since $\Theta$ is non-constant,
there exist unit vectors $v\in\cF$ and $v'\in\cF'$ such that,
if we define the holomorphic function $f:\CC_\infty\setminus E\to\CC$ by
 \[
 f(\lambda):=\langle \Theta(\lambda)v,v'\rangle \quad(\lambda\in\CC_\infty\setminus E),
 \]
then $f$ is non-constant.
This implies that the set $E$ is not a removable singularity for $f$.
By the choice of $\omega$ above, it follows that there exists a sequence $(\lambda_k)$ in $\CC\setminus \TT$
such that $|\lambda_k|\to1$ and $|f(\lambda_k)|>\omega(||\lambda_k|-1|)$ for all $k$.
As $|f|\le 1$ on $\DD$, all but finitely many $(\lambda_k)$ lie outside $\overline{\DD}$,
so we may as well suppose that $|\lambda_k|>1$ for all $k$. For each $k$ sufficiently large, 
there exists an $n\ge1$ such that $1+t_{n+1}\le |\lambda_k|\le 1+t_n$. Then, by the maximum principle
applied on the exterior of $|\lambda|=1+t_{n+1}$,
\[
|f(\lambda_k)|\le \max_{|\lambda|=1+t_{n+1}}|f(\lambda)|\le\max_{|\lambda|=1+t_{n+1}}\|\Theta(\lambda)\|,
\]
and hence
\[
1< \frac{|f(\lambda_k)|}{\omega(|\lambda_k|-1)}
\le \max_{|\lambda|=1+t_{n+1}}\frac{\|\Theta(\lambda)\|}{\omega(|\lambda_k|-1)}
\le \max_{|\lambda|=1+t_{n+1}}\frac{\|\Theta(\lambda)\|}{\omega(t_n)}.
\]
By our choice of $t_n$, we have $\omega(t_n)=(1+t_n)^n$, and so
\[
1< \max_{|\lambda|=1+t_{n+1}}\frac{\|\Theta(\lambda)\|}{(1+t_n)^n}
\le \max_{|\lambda|=1+t_{n+1}}\frac{\|\Theta(\lambda)\|}{(1+t_{n+1})^n}
=\max_{|\lambda|=1+t_{n+1}}\frac{\|\Theta(\lambda)\|}{|\lambda|^n}.
\]
Using \eqref{E:deltan}, we deduce that
\[
\frac{1}{\delta_n(\Theta)}=\sup_{|\lambda|>1}\min\bigl\{|\lambda|^n,\|\Theta(\lambda)\|\bigr\}
\ge (1+t_{n+1})^n=u_n^2,
\]
the last equality by our choice of $u_n$. To summarize, we have shown that,
for each $n$ such that $1+t_{n+1}\le |\lambda_k|\le 1+t_n$ for some $k$,
we have $\delta_n(\Theta)\le 1/u_n^2$. Since $|\lambda_k|\to1$, there are
infinitely many such $n$. Consequently
\[
\liminf_{n\to\infty}u_n\delta_n(\Theta)\le \liminf_{n\to\infty}1/u_n=0.
\]
Thus the sequence $(u_n)$ has the required property.
\end{proof}

It remains to show how Theorem~\ref{T:Esterle} can be
deduced using Theorem~\ref{T:inner}.
In fact this part of the argument was already explained in detail in \cite{Ra24},
so we shall just sketch the proof for the sake of completeness.

\begin{proof}[Proof of Theorem~\ref{T:Esterle}]
Let $E$ be a closed subset of $\TT$ of Lebesgue measure zero.
Let $(u_n)$ be a positive sequence tending to infinity and satisfying
the conclusion of Theorem~\ref{T:inner}. 
We shall show that,
if $T$ is a non-unitary Hilbert-space contraction such that
$\sigma(T)\subset E$, then $\limsup_{n\to\infty}\|T^{-n}\|/u_n
=\infty$, thereby proving Theorem~\ref{T:Esterle}.

Let $T$ be a non-unitary Hilbert-space contraction such that
$\sigma(T)\subset E$. By \cite[Lemma~6.1]{Ra24},
there is a completely non-unitary contraction $T_1$
defined on a separable Hilbert space such that
$\sigma(T_1)\subset\sigma(T)$ and $\|T_1^{-n}\|\le\|T^{-n}\|$
for all $n$. Thus, we may as well suppose from the outset
that $T$ is completely non-unitary and that it is defined
on a separable Hilbert space. Together with the fact that
$\sigma(T)$ is a subset $\TT$ of Lebesgue measure zero, 
this implies that both $T^n$ and $T^{*n}$ converge strongly to zero as $n\to\infty$ \cite[Lemma~6.2]{Ra24}.

We now invoke a version of the Nagy--Foias model theorem for contractions
\cite[Theorem~5.8]{Ra24}: since $T$ is a  contraction on a separable
Hilbert space such that $T^{*n}$ converges strongly to zero as $n\to\infty$, it is unitarily equivalent to $S_\Theta$, 
the compressed shift associated to a
non-constant operator-valued 
inner function $\Theta$ on~$\DD$. Thus $\|T^{-n}\|=\|S_\Theta^{-n}\|$
for all $n\ge1$, and from \cite[Theorem~5.5]{Ra24} we have
\[
\|S_\Theta^{-n}\|\ge \frac{1}{2}\Bigl(\frac{1}{\delta_n(\Theta)}-1\Bigr)
\quad(n\ge1),
\]
where $\delta_n(S_\Theta)$ is defined as in \eqref{E:deltandef}.

By \cite[Theorem~5.2]{Ra24}, since $\sigma(T)\subset E$,
it follows that $\Theta$ may be chosen so that it has a holomorphic
extension to the whole of $\CC\setminus E$ and so that $\Theta(\lambda)$ is
unitary for all $\lambda\in\TT\setminus E$.
Thus $\Theta$ satisfies all the hypotheses of
Theorem~\ref{T:inner}, and so by that theorem we deduce that
\[
\liminf_{n\to\infty}u_n\delta_n(\Theta)=0.
\]
Putting everything together, we  obtain
\[
\limsup_{n\to\infty}\|T^{-n}\|/u_n
=\limsup_{n\to\infty}\|S_\Theta^{-n}\|/u_n
\ge(1/2)(\liminf_{n\to\infty}u_n\delta_n(\Theta))^{-1}
=\infty,
\]
as was to be shown.
\end{proof}


\section*{Acknowledgements}
I am grateful to Fedja Nazarov for a valuable discussion on this topic,
and in particular for suggesting the idea used in the proof of Lemma~\ref{L:removsing}.

While this paper was being prepared, I learned from William Verreault that he too had
(independently) found a proof of Theorem~\ref{T:inner}. His proof, which can be found in \cite{Ve26},
is based on notions from potential theory,
and is different from the one presented here.
I thank him for sharing his ideas with me.

\bibliographystyle{plain}
\bibliography{biblist}

\end{document}